\newtheorem{theorem}{Theorem}[section]
\newtheorem{proposition}[theorem]{Proposition}
\newtheorem{corollary}[theorem]{Corollary}
\newtheorem{lemma}[theorem]{Lemma}
\theoremstyle{definition}
\newtheorem{remark}[theorem]{Remark}
\DeclareMathOperator{\Con}{Con}
\DeclareMathOperator{\Part}{Part}
\DeclareMathOperator{\Sub}{Sub}
\DeclareMathOperator{\var}{var}
\title[identities in the lattice of overcommutative varieties]
{Identities and quasiidentities\\
in the lattice of overcommutative\\
semigroup varieties}
\author{V.\,Yu.\,Shaprynski\v{\i}}
\address{Institute of Mathematics and Computer Science, Ural Federal University,
Lenina 51, 620083 Ekaterinburg, Russia}
\email{vshapr@yandex.ru}
\thanks{The article was partially supported by the Russian Foundation
for Basic Research (grant No. 10-01-00524) and the Ministry of Education
and Science of the Russian Federation (project No. 2.1.1/13995)}
\subjclass{Primary 20M07, secondary 08B05}
\keywords{Semigroup, variety, overcommutative variety, subvariety
lattice, lattice identity, lattice quasiidentity}
\date{}
\begin{document}

\maketitle

\begin{abstract}
We describe overcommutative varieties of semigroups whose lattice of
overcommutative subvarieties satisfies a non-trivial identity or
quasiidentity. These two properties turn out to be equivalent.
\end{abstract}

\section{Introduction and summary} \label{intr}

It is generally known that the lattice of all semigroup varieties is a
disjoint union of two wide and important sublattices: the ideal of all
periodic varieties and the co-ideal of all \emph{overcommutative} varieties,
that is, varieties containing the variety $\mathcal{COM}$ of all commutative
semigroups. We denote the lattice of all overcommutative varieties by
$\mathbf{OC}$.

By $\mathrm L(\mathcal V)$ we denote the subvariety lattice of a
semigroup variety $\mathcal V$. Identities and quasiidentities in
lattices $\mathrm L(\mathcal V)$ were investigated in several
papers, see Sections~11 and 12 in the
survey~\cite{Shevrin-Vernikov-Volkov}. The results
of~\cite{Burris-Nelson} and \cite{Pudlak-Tuma} imply that no
non-trivial lattice quasiidentity holds in the lattice of
commutative semigroup varieties and hence in the lattice $\mathrm
L(\mathcal V)$ whenever $\mathcal V$ is overcommutative. Therefore
investigation of identities and quasiidentities in lattices $\mathrm
L(\mathcal V)$ gives no information about the lattice $\mathbf{OC}$.
In view of this fact it is natural to study identities and
quasiidentities in lattices of overcommutative subvarieties of
overcommutative varieties. For an overcommutative variety $\mathcal
V$, its lattice of overcommutative subvarieties (that is, the
interval between $\mathcal{COM}$ and $\mathcal V$) will be denoted
by $\mathrm L_{\mathbf{OC}}(\mathcal V)$.

The structure of the lattice $\mathbf{OC}$ has been revealed by
Volkov in~\cite{Volkov}. We shall give the formulations of the
results of this paper in Section~\ref{decomp}. Basing on the results
of~\cite{Volkov}, Vernikov described overcommutative varieties whose
lattice of overcommutative subvarieties is distributive, modular,
arguesian, lower or upper semimodular, lower or upper
semidistributive or satisfies some other related
restrictions~\cite{Vernikov-99},\cite{Vernikov-01}. In the present
paper we describe overcommutative varieties $\mathcal V$ whose
lattice $\mathrm L_{\mathbf{OC}}(\mathcal V)$ satisfies a
non-trivial lattice identity or quasiidentity.

We need the following definitions and notation. Lattices are called
[\emph{quasi}]\emph{equationally} equivalent if they satisfy the
same [quasi]identities. A semigroup variety $\mathcal V$ is
\emph{permutative} if it satisfies an identity of the form
\begin{equation}
\label{permut}x_1x_2\dots x_n=x_{g(1)}x_{g(2)}\dots x_{g(n)}
\end{equation}
where $g$ is a non-trivial permutation on the set $\{1,\dots,n\}$. The
semigroup variety given by an identity system $\Sigma$ is denoted by
$\var\Sigma$. Put
\begin{align*}
&\mathcal{LZ}=\var\{xy=x\},\ \mathcal{RZ}=\var\{xy=y\},\\
&\mathcal X=\var\{xyzt=xytz,\ x^2y^2=y^2x^2=(xy)^2\}\ldotp
\end{align*}
The variety dual to $\mathcal X$ is denoted by
$\overleftarrow{\mathcal X}$.

The main result of this article is

\begin{theorem}
\label{main} For an overcommutative semigroup variety $\mathcal V$,
the following are equivalent:

\begin{itemize}
\item[$a)$]the lattice $\mathrm L_{\mathbf{OC}}(\mathcal V)$ satisfies
a non-trivial lattice identity;
\item[$b)$]the lattice $\mathrm L_{\mathbf{OC}}(\mathcal V)$ satisfies
a non-trivial lattice quasiidentity;
\item[$c)$]the lattice $\mathrm
L_{\mathbf{OC}}(\mathcal V)$ is equationally equivalent to a finite
lattice;
\item[$d)$]the lattice $\mathrm L_{\mathbf{OC}}(\mathcal V)$ is
quasiequationally equivalent to a finite lattice;
\item[$e)$]the variety $\mathcal V$ is permutative and contains none of the varieties
$\mathcal{LZ}$, $\mathcal{RZ}$, $\mathcal X$,
$\overleftarrow{\mathcal X}$.
\end{itemize}

\end{theorem}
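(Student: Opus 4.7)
The cycle of five equivalences separates into trivial and substantive implications. The implications $(c)\Rightarrow(a)$ and $(d)\Rightarrow(b)$ hold because every finite lattice satisfies a non-trivial lattice identity (and hence a non-trivial lattice quasiidentity). The implications $(a)\Rightarrow(b)$ and $(d)\Rightarrow(c)$ hold because identities are quasiidentities, so equational equivalence is a formal consequence of quasiequational equivalence. The only remaining content is $(b)\Rightarrow(e)$ and $(e)\Rightarrow(d)$, which together close the loop.

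For $(b)\Rightarrow(e)$ I would argue by contrapositive. If $\mathcal V$ violates $(e)$, then either $\mathcal V$ is non-permutative or $\mathcal V$ contains one of the four forbidden varieties $\mathcal{LZ}$, $\mathcal{RZ}$, $\mathcal X$, $\overleftarrow{\mathcal X}$. The plan is to exhibit, in each of these five cases, an embedding of a partition lattice $\Part(\Omega)$ on a sufficiently large set $\Omega$ into $\mathrm L_{\mathbf{OC}}(\mathcal V)$. By the Pudlak--Tuma theorem every finite lattice is a sublattice of some $\Part(\Omega)$, and the Burris--Nelson argument (already invoked in the introduction to rule out non-trivial quasiidentities in the full lattice of commutative semigroup varieties) then forces any non-trivial quasiidentity to fail on $\mathrm L_{\mathbf{OC}}(\mathcal V)$. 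Each of the five embeddings will lean on Volkov's decomposition of $\mathbf{OC}$ (to be recalled in Section~\ref{decomp}): the point is that each failure condition in $(e)$ forces some factor of the decomposition to be large enough to swallow an infinite partition lattice.

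For $(e)\Rightarrow(d)$ I would apply the same Volkov decomposition in the reverse direction. Under the restrictions of $(e)$, I expect every factor in the decomposition of $\mathrm L_{\mathbf{OC}}(\mathcal V)$ to be finite and of bounded size depending only on the excluded list, and only boundedly many factor shapes should appear up to isomorphism. The direct product of a transversal of these factors will then be a finite lattice which is quasiequationally equivalent to $\mathrm L_{\mathbf{OC}}(\mathcal V)$, producing the witness demanded by $(d)$.

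The main obstacle should be the contrapositive step for $\mathcal X$ and $\overleftarrow{\mathcal X}$. The non-permutative case and the $\mathcal{LZ}$, $\mathcal{RZ}$ cases are more tractable: in the former one can recycle classical partition-lattice constructions in commutative-plus-a-bit varieties, and in the latter the one-sided absorbing identity immediately produces ample ``room'' in $\mathrm L_{\mathbf{OC}}(\mathcal V)$. The variety $\mathcal X$, by contrast, sits very close to the commutative boundary --- the identity $xyzt=xytz$ permits only a restricted three-variable swap and $x^2y^2=y^2x^2=(xy)^2$ pins down squares --- so locating a partition-lattice sub-factor inside the Volkov decomposition of $\mathrm L_{\mathbf{OC}}(\mathcal X\vee\mathcal{COM})$ will require a careful combinatorial study of the free objects of this variety, and this step is where I expect the proof's technical heart to lie.
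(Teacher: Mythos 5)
Your proof scheme matches the paper's exactly: close the loop via $a)\Rightarrow b)\Rightarrow e)\Rightarrow d)\Rightarrow c)\Rightarrow a)$, invoke Volkov's subdirect decomposition for both substantive implications, push large universal lattices into a single factor for $b)\Rightarrow e)$, and bound the factors for $e)\Rightarrow d)$. But there are two places where the plan as stated would not survive contact with the details.

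First, for the non-permutative case of $b)\Rightarrow e)$ you plan to ``recycle classical partition-lattice constructions.'' That does not quite work. The paper takes the partition $\lambda=(1,\dots,1)$ of $n$ into $n$ ones, for which $W_\lambda$ is a \emph{regular} $\mathbb S_n$-set, so $\Con(W_\lambda)\cong\Sub(\mathbb S_n)$ — a subgroup lattice, not a partition lattice — and then appeals to the second Pudl\'ak--T\accent23uma theorem (every finite lattice embeds in some $\Sub(\mathbb S_n)$). If you try to get a trivial-group partition like $(m,m-1,\dots,1)$ here, non-permutativity of $\mathcal V$ does not immediately imply that $W_\lambda/\varphi_\lambda(\mathcal V)$ is large, because identities between words in such a $W_\lambda$ need not be of the form~\eqref{permut}. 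Partition lattices are the right tool only in the $\mathcal{LZ}$, $\mathcal{RZ}$, $\mathcal X$, $\overleftarrow{\mathcal X}$ cases, where the partition $(m,m-1,\dots,1)$ forces $G_\lambda$ trivial and Lemmas~\ref{LZ word problem} and \ref{X word problem} provide the lower bound on $|W_\lambda/\varphi_\lambda(\mathcal V)|$. Also, the fact you need at the end of $b)\Rightarrow e)$ is that every non-trivial lattice quasiidentity fails in some \emph{finite} lattice; that is the Budkin--Gorbunov theorem, not the Burris--Nelson result you mention (the latter is about embedding $\overline{\Pi_m}$ into the lattice of commutative varieties, which lives on the wrong side of $\mathcal{COM}$ for this problem).

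Second, you locate the technical heart in the $\mathcal X$ branch of $b)\Rightarrow e)$. In fact that step is short once the word problem in $\mathcal X$ (Lemma~\ref{X word problem}) is solved. The genuinely hard part is $e)\Rightarrow d)$: one must use Perkins' theorem to put $\mathcal V\subseteq\mathcal P_k$, derive the identities $x^ny^nz^n=y^nx^nz^n$ and $xtx^{n-1}y^nz^n=yty^{n-1}x^nz^n$ (and duals) from $\mathcal{LZ},\mathcal{RZ},\mathcal X,\overleftarrow{\mathcal X}\nsubseteq\mathcal V$, and then run a normal-form argument on the prefix $L(w)$ and suffix $R(w)$ of length $k$ to bound $|\Con(W_\lambda/\varphi_\lambda(\mathcal V))|$ uniformly in $\lambda$. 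Your phrase ``I expect every factor to be of bounded size'' is the correct target, but it conceals essentially all of the combinatorial work, while the $\mathcal X$-case you flag as the obstacle is comparatively routine.
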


Since every finite lattice has a finite identity
basis~\cite{McKenzie}, Theorem~\ref{main} immediately imply the
following

\begin{corollary}
\label{finite basis}If $\mathcal V$ is an overcommutative variety
and the lattice $\mathrm L_{\mathbf{OC}}(\mathcal V)$ satisfies a
non-trivial identity then this lattice has a finite identity
basis.\qed
\end{corollary}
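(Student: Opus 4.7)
The plan is to deduce the corollary directly from Theorem~\ref{main} together with McKenzie's theorem that every finite lattice has a finite identity basis, which is cited in the paragraph preceding the corollary. So the argument is a short chain of implications rather than any new combinatorial work.

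First I would assume the hypothesis, namely that $\mathrm L_{\mathbf{OC}}(\mathcal V)$ satisfies some non-trivial lattice identity. This is condition $a)$ of Theorem~\ref{main}. Applying the equivalence $a)\Leftrightarrow c)$ of Theorem~\ref{main}, I obtain a finite lattice $L$ that is equationally equivalent to $\mathrm L_{\mathbf{OC}}(\mathcal V)$, meaning $L$ and $\mathrm L_{\mathbf{OC}}(\mathcal V)$ satisfy exactly the same lattice identities.

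Next, I would invoke McKenzie's theorem~\cite{McKenzie}: the finite lattice $L$ has a finite identity basis, say $\Sigma$. By the definition of an identity basis, every lattice identity valid in $L$ is a consequence of $\Sigma$. Since $\mathrm L_{\mathbf{OC}}(\mathcal V)$ satisfies precisely the same identities as $L$, the finite set $\Sigma$ is also an identity basis for $\mathrm L_{\mathbf{OC}}(\mathcal V)$, which gives the desired conclusion.

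There is no real obstacle here; the corollary is a formal consequence of the main theorem. The only point that requires a moment's thought is the (immediate) observation that equational equivalence preserves the property of possessing a finite identity basis, because a basis depends only on the set of identities satisfied by the lattice and not on its internal structure. Note that we only need the implication $a)\Rightarrow c)$ of Theorem~\ref{main}, so the corollary will be available as soon as that one direction of the main theorem has been established.
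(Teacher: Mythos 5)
Your proposal is correct and is exactly the paper's argument: the paper derives the corollary immediately from the equivalence $a)\Leftrightarrow c)$ of Theorem~\ref{main} together with McKenzie's theorem that every finite lattice has a finite identity basis. Nothing further is needed.
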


The article consists of four sections. Sections~\ref{decomp} and
\ref{prel} contain preliminary results. In Section~\ref{proof} the
proof of Theorem~\ref{main} is given.

\section{Subdirect decomposition of the lattice $\mathbf{OC}$}
\label{decomp}

The aim of this section is to formulate the results
of~\cite{Volkov}. In order to do this, we need some new definitions
and notation. The free semigroup over the countably infinite
alphabet $X=\{x_1,x_2,\dots\}$ is denoted by $F$. The symbol
$\equiv$ stands for the equality relation on $F$. Put
$X_m=\{x_1,\dots,x_m\}$. Let $F_m$ be the free semigroup over the
set $X_m$. If $w$ is a word then we denote the length of $w$ by
$\ell(w)$ and the number of occurrences of a letter $x_i$ in $w$ by
$\ell_{x_i}(w)$ or, shortly, by $\ell_i(w)$. The symmetric group on
the set $\{1,\dots,m\}$ is denoted by $\mathbb S_m$. For
$g\in\mathbb S_m$ and $1\le i\le m$, we put $g(x_i)=x_{g(i)}$ thus
identifying $\mathbb S_m$ with the symmetric group on $X_m$. The
lattice of all equivalence relations on a set $A$ is denoted by
$\Part(A)$. A set $A$ on which a group $G$ acts is called a
$G$-\emph{set}. A $G$-set can be considered as a unary algebra with
the set $G$ of operations. This observation, in particular, allows
us to consider congruences of $G$-sets. The congruence lattice of a
$G$-set $A$ is denoted by $\Con(A)$. If $L$ is a lattice and $x\in
L$ then $(x]$ (respectively, $[x)$) stands for the principal ideal
(respectively, co-ideal) generated by the element $x$. By $\overline
L$ we denote the dual lattice to a lattice $L$.

A \emph{partition} is a sequence of positive integers
$\lambda=(\lambda_1,\dots,\lambda_m)$ where
$\lambda_1\ge\dots\ge\lambda_m$ and $m\ge 2$. The set of all
partitions is denoted by $\Lambda$. Let us fix a partition
$\lambda$. We say that $\lambda$ is a \emph{partition of the number}
$n$ \emph{into} $m$ \emph{parts} where
$n=\sum\limits_{i=1}^{m}\lambda_i$. The numbers $\lambda_i$ are
called \emph{components} of $\lambda$. We consider the set
$$W_\lambda=\{w\in F_m\mid\ell_i(w)=\lambda_i\ \text{for}\ 1\le i\le m\}$$
and the group
$$G_\lambda=\{g\in\mathbb S_m\mid\lambda_i=\lambda_{g(i)}\ \text{for}\ 1\le i\le m\}\ldotp$$
Every element $g\in G_\lambda$, as a permutation on the alphabet
$X_m$, defines a permutation on the set $W_\lambda$ which renames
letters in each word in $W_\lambda$. This means that the group
$G_\lambda$ acts on the set $W_\lambda$ and this set is considered
as a $G_\lambda$-set. For an overcommutative variety $\mathcal V$,
we define an equivalence relation $\varphi_\lambda(\mathcal V)$ on
$W_\lambda$ as the restriction to the set $W_\lambda$ of the fully
invariant congruence on $F$ corresponding to $\mathcal V$. Thus a
mapping
$\varphi_\lambda\colon\mathbf{OC}\longrightarrow\Part(W_\lambda)$ is
defined.

\begin{proposition}[\cite{Volkov}]
\label{decomposition OC}Every mapping $\varphi_\lambda$ is a
homomorphism of the lattice $\mathbf{OC}$ onto the lattice
$\overline{\Con(W_\lambda)}$. These homomorphisms are components of
an embedding
$$\varphi=(\varphi_\lambda)_{\lambda\in\Lambda}\colon\mathbf{OC}\longrightarrow\prod\limits_{\lambda\in\Lambda}\overline{\Con(W_\lambda)}$$
which decomposes the lattice $\mathbf{OC}$ into a subdirect product
of the lattices $\overline{\Con(W_\lambda)},\
\lambda\in\Lambda$.\qed
\end{proposition}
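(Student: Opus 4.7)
The plan is to establish four facts: (i) each $\varphi_\lambda(\mathcal V)$ is a $G_\lambda$-congruence on $W_\lambda$; (ii) each $\varphi_\lambda$ preserves joins and meets as a map $\mathbf{OC} \to \overline{\Con(W_\lambda)}$; (iii) each $\varphi_\lambda$ is surjective; and (iv) the product map $\varphi = (\varphi_\lambda)_{\lambda \in \Lambda}$ is injective. Together these give the claimed subdirect decomposition. Throughout I will work with the fully invariant congruence $\theta(\mathcal V)$ on $F$ associated with $\mathcal V$; the assignment $\mathcal V \mapsto \theta(\mathcal V)$ is an order-reversing bijection between semigroup varieties and fully invariant congruences, and overcommutativity of $\mathcal V$ amounts to $\theta(\mathcal V) \subseteq \theta(\mathcal{COM})$. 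The key observation used repeatedly is that this containment forces $\theta(\mathcal V)$-related words to share the same content, so every $\theta(\mathcal V)$-class that meets $W_\lambda$ lies entirely inside~$W_\lambda$.

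For (i), full invariance of $\theta(\mathcal V)$ applied to the endomorphism of $F$ extending any $g \in G_\lambda \le \mathbb S_m$ directly yields $G_\lambda$-invariance of the restriction $\varphi_\lambda(\mathcal V)$. For (ii), the join $\mathcal V \vee \mathcal W$ in $\mathbf{OC}$ corresponds to $\theta(\mathcal V) \cap \theta(\mathcal W)$, and since restriction commutes with intersection we obtain $\varphi_\lambda(\mathcal V \vee \mathcal W) = \varphi_\lambda(\mathcal V) \cap \varphi_\lambda(\mathcal W)$, the join in $\overline{\Con(W_\lambda)}$. The meet $\mathcal V \wedge \mathcal W$ corresponds to the fully invariant join of $\theta(\mathcal V)$ and $\theta(\mathcal W)$, which is simply the transitive--symmetric closure of $\theta(\mathcal V) \cup \theta(\mathcal W)$ since each summand is already a congruence. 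Because both summands are contained in $\theta(\mathcal{COM})$, any chain connecting two words of $W_\lambda$ remains inside $W_\lambda$; restricting this join to $W_\lambda$ therefore produces the transitive closure of $\varphi_\lambda(\mathcal V) \cup \varphi_\lambda(\mathcal W)$, which is the meet in $\overline{\Con(W_\lambda)}$.

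For (iii), given $\rho \in \Con(W_\lambda)$, put $\mathcal V_\rho = \var \Sigma_\rho$ where $\Sigma_\rho = \{u = v : (u, v) \in \rho\}$. Overcommutativity of $\mathcal V_\rho$ is automatic since each identity in $\Sigma_\rho$ holds in $\mathcal{COM}$, and the inclusion $\rho \subseteq \varphi_\lambda(\mathcal V_\rho)$ is immediate. For the reverse inclusion, a pair $(u, v) \in \varphi_\lambda(\mathcal V_\rho)$ is witnessed by a chain of one-step rewrites $w_i \to w_{i+1}$ in which $w_i = s \alpha(a) t$, $w_{i+1} = s \alpha(b) t$, $(a, b) \in \rho$, $\alpha \in \mathrm{End}(F)$, and $s, t$ are (possibly empty) words. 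Content preservation keeps every $w_i$ in $W_\lambda$. The length identity
\[
\ell(\alpha(a)) = \sum_{i=1}^m \lambda_i\, \ell(\alpha(x_i)) \ge \sum_{i=1}^m \lambda_i = \ell(w_i)
\]
forces $s$ and $t$ to be empty and $\ell(\alpha(x_j)) = 1$ for every $j \le m$, so $\alpha$ restricts to a self-map of $X_m$. Comparing letter multiplicities in $\alpha(a) = w_i \in W_\lambda$ then shows this self-map is a permutation lying in $G_\lambda$, and the $G_\lambda$-invariance of $\rho$ yields $(w_i, w_{i+1}) \in \rho$ at each step. Transitivity of $\rho$ completes the proof that $(u, v) \in \rho$.

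For (iv), every $\theta(\mathcal{COM})$-class in $F$ using at least two distinct letters is the image of a unique $W_\lambda$ under a renaming endomorphism of $F$; full invariance then propagates the restrictions of $\theta(\mathcal V)$ and $\theta(\mathcal W)$ along such renamings, while classes of the form $\{x_i^n\}$ are singletons and carry only trivial restrictions. Hence equality of $\varphi_\lambda(\mathcal V)$ and $\varphi_\lambda(\mathcal W)$ for every $\lambda$ forces $\theta(\mathcal V) = \theta(\mathcal W)$, and so $\mathcal V = \mathcal W$. I expect the subtlest point to be the length-and-multiplicity analysis in (iii): it is the mechanism that converts the endomorphism-closure inherent in fully invariant congruences on $F$ into the much more restrictive $G_\lambda$-closure satisfied by congruences of the $G_\lambda$-set $W_\lambda$.
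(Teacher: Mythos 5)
Your proof is correct. Note that the paper itself offers no argument for Proposition~\ref{decomposition OC}; it simply cites \cite{Volkov} and marks the statement \qed, so there is no in-paper proof to compare against. What you have produced is a complete, self-contained reconstruction, and it follows the natural route one would expect: pass to the anti-isomorphism $\mathcal V\mapsto\theta(\mathcal V)$ between $\mathbf{OC}$ and the lattice of fully invariant congruences below $\theta(\mathcal{COM})$, observe that content-preservation confines all relevant chains to $W_\lambda$, and exploit the length inequality $\ell(\alpha(a))=\sum_i\lambda_i\,\ell(\alpha(x_i))\ge\ell(w)$ to force the substituting endomorphism to be a letter-to-letter permutation, which (by comparing multiplicities and a pigeonhole count) must lie in $G_\lambda$. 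That last step is indeed the crux of surjectivity, converting endomorphism-closure of fully invariant congruences into $G_\lambda$-closure, and you have it right. The injectivity argument via renaming is also sound; the only cosmetic point worth spelling out is that the renaming should be extended to an automorphism of $F$ so that full invariance of $\theta(\mathcal W)$ can be applied in both directions. All four steps check out, so the proposal constitutes a valid proof of the statement the paper attributes to Volkov.
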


One can generalize Proposition~\ref{decomposition OC} in order to
obtain a subdirect decomposition of the lattice
$\mathrm{L}_\mathbf{OC}(\mathcal V)$. As a surjective homomorphism,
$\varphi_\lambda$ maps principal ideals to principal ideals, so
$$\varphi_\lambda\bigl(\mathrm{L}_\mathbf{OC}(\mathcal V)\bigr)=
\bigl(\varphi_\lambda(\mathcal V)\bigr]_{\overline{\Con(W_\lambda)}}=
\overline{\bigl[\varphi_\lambda(\mathcal V)\bigr)_{\Con(W_\lambda)}}\ldotp$$
The co-ideal $\bigl[\varphi_\lambda(\mathcal V)\bigr)_{\Con(W_\lambda)}$ is
isomorphic to the congruence lattice of the factor $G_\lambda$-set
$W_\lambda/\varphi_\lambda(\mathcal V)$. Thus we have

\begin{corollary}[\cite{Volkov}]
\label{decomposition general}For any variety $\mathcal
V\in\mathbf{OC}$, the homomorphism
$\varphi|_{\mathrm{L}_\mathbf{OC}(\mathcal V)}$ defines a
decomposition of the lattice $\mathrm{L}_\mathbf{OC}(\mathcal V)$
into a subdirect product of the lattices
$\overline{\Con\bigl(W_\lambda/\varphi_\lambda(\mathcal V)\bigr)}$.\qed
\end{corollary}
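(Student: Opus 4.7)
The plan is to piece the corollary together from Proposition~\ref{decomposition OC} using two standard facts about lattices and congruences, both of which are invoked implicitly in the paragraph preceding the statement. The overall strategy is to restrict the embedding $\varphi$ to the sublattice $\mathrm{L}_\mathbf{OC}(\mathcal V) = (\mathcal V]_\mathbf{OC}$ and identify the image of each $\varphi_\lambda$ with $\overline{\Con(W_\lambda/\varphi_\lambda(\mathcal V))}$.

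The first fact I would use is that a surjective lattice homomorphism sends the principal ideal generated by $a$ onto the principal ideal generated by $f(a)$; applied to $\varphi_\lambda\colon\mathbf{OC}\to\overline{\Con(W_\lambda)}$, which is surjective by Proposition~\ref{decomposition OC}, this is exactly the first equality in the display above the statement. The passage to the second equality just records that principal ideals in $\overline L$ coincide with principal co-ideals in $L$. The second fact is the correspondence theorem: for any algebra $A$ and any $\theta\in\Con(A)$, the quotient map induces a lattice isomorphism between the co-ideal $[\theta)_{\Con(A)}$ and $\Con(A/\theta)$. Applying this to the $G_\lambda$-set $W_\lambda$ (viewed as a unary algebra with operation set $G_\lambda$) and the congruence $\theta=\varphi_\lambda(\mathcal V)$ yields
$$\varphi_\lambda\bigl(\mathrm{L}_\mathbf{OC}(\mathcal V)\bigr)\cong\overline{\Con\bigl(W_\lambda/\varphi_\lambda(\mathcal V)\bigr)},$$
so each component of the restricted map is a surjective lattice homomorphism onto the desired target.

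Finally I would check that the combined restriction $\varphi|_{\mathrm{L}_\mathbf{OC}(\mathcal V)}$ remains injective; this is automatic, since it is a restriction of the embedding $\varphi$ supplied by Proposition~\ref{decomposition OC}. Combined with the componentwise surjectivity established above, this exhibits $\mathrm{L}_\mathbf{OC}(\mathcal V)$ as a subdirect product of the lattices $\overline{\Con(W_\lambda/\varphi_\lambda(\mathcal V))}$, $\lambda\in\Lambda$. The only mild care required is bookkeeping between $\Con(W_\lambda)$ and its dual, since the argument repeatedly switches between principal ideals in $\overline{\Con(W_\lambda)}$ and principal co-ideals in $\Con(W_\lambda)$; no deeper obstacle arises.
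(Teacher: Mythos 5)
Your proposal is correct and follows essentially the same route as the paper: the paper likewise combines the fact that the surjective homomorphism $\varphi_\lambda$ maps the principal ideal $(\mathcal V]$ onto $\bigl(\varphi_\lambda(\mathcal V)\bigr]_{\overline{\Con(W_\lambda)}}=\overline{\bigl[\varphi_\lambda(\mathcal V)\bigr)_{\Con(W_\lambda)}}$ with the correspondence theorem identifying that co-ideal with $\Con\bigl(W_\lambda/\varphi_\lambda(\mathcal V)\bigr)$, injectivity of the restricted $\varphi$ being inherited from Proposition~\ref{decomposition OC}.
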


Another result we need is

\begin{proposition}[\cite{Volkov}]
\label{embed}Every lattice
$\overline{\Con\bigl(W_\lambda/\varphi_\lambda(\mathcal V)\bigr)}$ can be
embedded into $\mathrm{L}_\mathbf{OC}(\mathcal V)$.\qed
\end{proposition}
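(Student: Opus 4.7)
The plan is to construct a section of the natural surjection
$$\varphi_\lambda|_{\mathrm L_\mathbf{OC}(\mathcal V)}\colon\mathrm L_\mathbf{OC}(\mathcal V)\twoheadrightarrow\overline{\Con(W_\lambda/\varphi_\lambda(\mathcal V))}$$
furnished by Corollary~\ref{decomposition general}, and then verify that it is a lattice embedding. Writing $A:=W_\lambda/\varphi_\lambda(\mathcal V)$, for any $\theta\in\Con(A)$ let $\widetilde\theta\in\Con(W_\lambda)$ denote its pullback along the canonical quotient (so $\widetilde\theta\supseteq\varphi_\lambda(\mathcal V)$), and set
$$s(\theta)\;:=\;\var\bigl(\operatorname{Id}(\mathcal V)\cup\{u=v:(u,v)\in\widetilde\theta\}\bigr).$$
Since the pairs in $\widetilde\theta$ relate words with equal letter-occurrence multisets, the added identities hold in $\mathcal{COM}$; hence $s(\theta)\in[\mathcal{COM},\mathcal V]=\mathrm L_\mathbf{OC}(\mathcal V)$, and $\theta\mapsto s(\theta)$ is plainly order-preserving from $\overline{\Con(A)}$ into $\mathrm L_\mathbf{OC}(\mathcal V)$.

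The first substantive step is to show $\varphi_\lambda(s(\theta))=\widetilde\theta$. The inclusion $\supseteq$ is tautological; the reverse requires proving that the fully invariant closure on $F$ of $\operatorname{Id}(\mathcal V)\cup\widetilde\theta$ produces no new pairs when intersected with $W_\lambda\times W_\lambda$. I would establish this by induction on the length of a derivation of an identity $u\equiv v$ in $W_\lambda$ from $\operatorname{Id}(\mathcal V)\cup\widetilde\theta$, exploiting the facts that $\widetilde\theta$ is a $G_\lambda$-congruence containing $\varphi_\lambda(\mathcal V)$ and that every substitution preserves letter-occurrence multisets, so that any detour through a partition $\mu\ne\lambda$ can be replaced by a derivation staying inside $W_\lambda$. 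Once this is proved, $\varphi_\lambda\circ s=\operatorname{id}$, so $s$ is injective; moreover $s(\theta)$ is then the maximum element of the fiber $\varphi_\lambda^{-1}(\theta)\cap\mathrm L_\mathbf{OC}(\mathcal V)$, since any other variety in that fiber satisfies at least the identities of $\mathcal V$ and of $\widetilde\theta$.

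Meet-preservation of $s$ follows immediately: both $s(\theta_1)\wedge s(\theta_2)$ and $s(\theta_1\wedge\theta_2)$ lie in $\varphi_\lambda^{-1}(\theta_1\wedge\theta_2)$ (since $\varphi_\lambda$ is a lattice homomorphism), and the combination of order-preservation of $s$ with the maximality of $s(\theta_1\wedge\theta_2)$ in its fiber forces equality. The main obstacle I expect is join-preservation: from the same considerations only the inequality $s(\theta_1)\vee s(\theta_2)\le s(\theta_1\vee\theta_2)$ is automatic. To obtain the reverse inequality I would invoke the subdirect embedding of Corollary~\ref{decomposition general} and reduce matters to checking, for every partition $\mu$, that the substitution-induced map $\Con(W_\lambda)\to\Con(W_\mu)$ sending $\widetilde\theta\mapsto\varphi_\mu(s(\theta))$ preserves binary meets. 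This amounts to a careful combinatorial analysis of how $\widetilde\theta$-identities propagate under substitutions from $W_\lambda$ to $W_\mu$, making use of the $G_\lambda$- and $G_\mu$-set structures; it is the technical heart of the proof.
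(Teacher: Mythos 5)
The paper offers no proof of this proposition at all---it is quoted from \cite{Volkov} with a \qed---so your sketch can only be judged on its own merits. The first two steps of your plan are sound: the identity $\varphi_\lambda(s(\theta))=\widetilde\theta$ does hold (since all identities involved are balanced, every word in a derivation chain starting from a word of $W_\lambda$ stays in $W_\lambda$, and a length count shows that an elementary deduction step from a pair of $\widetilde\theta$ landing back in $W_\lambda$ must use an empty context and a substitution belonging to $G_\lambda$), and meet-preservation of the maximal section follows formally, as you say. The fatal problem is the step you defer as ``the technical heart'': join-preservation is not merely unproven for your map $s$, it is \emph{false}. Take $\mathcal V$ to be the variety of all semigroups and $\lambda=(1,1,1)$, so $\varphi_\lambda(\mathcal V)=\Delta$ and $W_\lambda$ is a regular $\mathbb S_3$-set. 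Let $\theta_1$ and $\theta_2$ be the congruences generated by $(x_1x_2x_3,\,x_2x_1x_3)$ and $(x_1x_2x_3,\,x_1x_3x_2)$, i.e.\ ``same last letter'' and ``same first letter''. Then $\theta_1\cap\theta_2=\Delta$, so $s(\theta_1\vee_{\overline{\Con}}\theta_2)=s(\Delta)=\mathcal V$. But $s(\theta_1)=\var\{xyz=yxz\}$ and $s(\theta_2)=\var\{xyz=xzy\}$ both satisfy the non-trivial identity $xyzx=xzyx$ (multiply the defining identity by a letter on the left, respectively on the right), hence so does $s(\theta_1)\vee s(\theta_2)$, which is therefore strictly smaller than $\mathcal V$. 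In particular the auxiliary claim to which you reduce join-preservation---that $\theta\mapsto\varphi_\mu(s(\theta))$ preserves binary intersections for every partition $\mu$---already fails for $\mu=(2,1,1)$, witnessed by the pair $(x_1x_2x_3x_1,\,x_1x_3x_2x_1)$.

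The underlying obstruction is general: for a surjective lattice homomorphism, the section picking the maximum of each fiber preserves meets but not joins (and the minimum-section, if it exists, has the dual defect). So no ``canonical fiber representative'' of this kind can work; one must choose the representative varieties more delicately, in effect controlling what the added $\widetilde\theta$-identities entail on the other layers $W_\mu$, $\mu\ne\lambda$, and that control is precisely the non-trivial content of Volkov's result. As written, your proposal identifies the right difficulty but proposes to resolve it by proving a statement that is false, so the argument cannot be completed along these lines.
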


\section{Preliminaries on semigroup identities}
\label{prel}

In this section we study some equational properties of the varieties
$\mathcal{LZ}$, $\mathcal{RZ}$, $\mathcal X$ and
$\overleftarrow{\mathcal X}$. The following two lemmas and their duals give
the solution of word problem in these varieties. For the varieties
$\mathcal{LZ}$ and $\mathcal{RZ}$ it is generally known and evident.

\begin{lemma}
\label{LZ word problem}An identity $u=v$ holds in $\mathcal{LZ}$ if
and only if the words $u$ and $v$ start with the same letters.\qed
\end{lemma}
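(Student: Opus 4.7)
The plan is to verify both directions directly from the defining identity $xy=x$, with the argument essentially reducing to the explicit description of the free object in $\mathcal{LZ}$.

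For the ``only if'' direction, I would use the free left-zero semigroup on the alphabet $X$, which as a set coincides with $X$ equipped with the multiplication $x\cdot y=x$. A straightforward induction on length shows that, under the canonical homomorphism from $F$ into this semigroup, a word $w\equiv x_{i_1}x_{i_2}\dots x_{i_k}$ evaluates to its first letter $x_{i_1}$. Thus if $u=v$ holds in $\mathcal{LZ}$, then evaluating both sides forces the first letters of $u$ and $v$ to agree.

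For the ``if'' direction, I would show by induction on $\ell(w)$ that the identity $xy=x$ entails $w=x_{i_1}$ whenever $x_{i_1}$ is the first letter of $w$. The case $\ell(w)=1$ is trivial; for the inductive step, write $w\equiv x_{i_1}w'$, apply the inductive hypothesis to rewrite $w'$ as a single letter (or directly apply $xy=x$ with $x=x_{i_1}$ and $y=w'$), and obtain $w=x_{i_1}$. Hence if $u$ and $v$ begin with the same letter $x_j$, both are equal to $x_j$ modulo the identity $xy=x$, so $u=v$ holds in $\mathcal{LZ}$.

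There is no real obstacle here: the only subtlety is to confirm that $X$ with left-zero multiplication really is the free object in $\mathcal{LZ}$, which is immediate from the fact that any map $X\to S$ into a left-zero semigroup extends uniquely to a homomorphism sending $x_{i_1}x_{i_2}\dots x_{i_k}$ to the image of $x_{i_1}$. The lemma for $\mathcal{RZ}$ is dual, with ``start'' replaced by ``end''.
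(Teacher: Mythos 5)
Your proof is correct. The paper gives no argument for this lemma, remarking only that it is ``generally known and evident,'' and your write-up is exactly the standard two-directional argument one would supply: evaluating in the free left-zero semigroup (which is $X$ with $x\cdot y=x$) for necessity, and induction on length using $xy=x$ for sufficiency.
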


An identity $u=v$ is called \emph{balanced} if $\ell_x(u)=\ell_x(v)$
for every $x\in X$. All identities satisfied by overcommutative
varieties are balanced. A letter $x$ in a word $w\in F$ is called
\emph{simple} if $\ell_x(w)=1$ and \emph{multiple} otherwise.

\begin{lemma}
\label{X word problem}An identity $u=v$ holds in $\mathcal X$ if and
only if it is balanced and at least one of the following holds:

\begin{itemize}
\item[(i)] $u\equiv v\in X$;
\item[(ii)] $u$ and $v$ have equal first letters and equal second letters;
\item[(iii)] $u$ and $v$ have equal first letters and their second letters are multiple;
\item[(iv)] the first and the second letters in $u$ and $v$ are
multiple.
\end{itemize}

\end{lemma}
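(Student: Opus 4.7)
The plan is to handle sufficiency and necessity separately, exploiting the three defining identities of $\mathcal X$ in complementary ways.

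For sufficiency, case~(i) is vacuous. The engine for the remaining cases will be a \emph{tail-permutation} principle: using substitution instances of $xyzt=xytz$ closed under context, every permutation of the positions $3,4,\ldots,n$ inside a word of length $n\ge 2$ is realisable inside $\mathcal X$, so any two balanced words sharing their first two letters are $\mathcal X$-equivalent; this disposes of~(ii). For~(iii), with common first letter $x$ and second letters $a,b$ both multiple, I would first apply the tail-permutation principle to bring $u$ into the form $xa\cdot bab\cdot w$ (possible because $a$ and $b$ are both multiple in the common content), then use the chain $abab = a^2b^2 = b^2a^2 = bbaa$ to turn $u$ into a word starting with $xb$ and having the same content as $v$, and conclude by~(ii). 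Case~(iv) reduces to~(iii) by the same device: using (iii), replace the second letter of $u$ by its first letter $x$, and symmetrically for $v$; bring $u$ into the form $xxyy\cdot w$ (possible because $y$ is multiple in the common content); apply $x^2y^2=y^2x^2$ to obtain $yyxx\cdot w$; and close by~(ii).

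For necessity, my plan is to establish two preservation principles: if the first (respectively, second) letter of $u$ is simple in $u$, then it coincides with the first (respectively, second) letter of every word $v$ with $u=v$ in $\mathcal X$. Each is proved by inspecting a single-step rewrite $u\to u'$ driven by a substitution instance of one of the defining identities applied in an arbitrary context $\alpha\cdot(-)\cdot\beta$. Substitution instances of $xyzt=xytz$ never alter the first two letters of the ambient word. Substitution instances of $x^2y^2=y^2x^2$ and $x^2y^2=(xy)^2$ can affect the letter at position $1$ or $2$ only when the rewrite zone overlaps that position, but each such template contains a square among $u_1^2$, $u_2^2$, $(u_1u_2)^2$, so any letter sitting in the overlap already occurs at least twice inside the zone itself; hence it is multiple in $u$, contradicting simplicity. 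Thus simple letters at positions~$1$ and~$2$ are frozen under $\mathcal X$-equivalence.

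A short case analysis then completes the necessity. If a simple letter sits at position~$2$ of $u$, or symmetrically of $v$, both preservation principles apply and case~(ii) holds. Otherwise both second letters are multiple, and either the first letters coincide, giving~(iii), or they differ, in which case the preservation of a simple first letter forces both first letters to be multiple, giving~(iv). The main obstacle I anticipate is the bookkeeping of the preservation step: one must examine separately every way the rewrite zone can meet positions~$1$ and~$2$ (prefix overlap, or overlap beginning at position~$2$) and every length of the template factors $u_1,u_2$. Each sub-case reduces to the elementary observation that $w^2$ contains each letter of $w$ at least twice, but organising this check uniformly across the three defining identities is the technical chore on which the proof will stand or fall.
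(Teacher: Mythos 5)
Your plan is close in spirit to the paper's, which also proves sufficiency case by case from the defining identities and necessity by showing the four conditions are preserved under deduction; the main structural difference is that you derive (iv) from (iii) while the paper derives (iii) from (iv), and you make the necessity direction explicit via a rewrite-step analysis where the paper dismisses it as a ``straightforward verification'' that the condition set is a fully invariant congruence. Your explicit overlap argument is a genuine improvement in readability, and the order reversal is harmless since your proof of (iii) only invokes (ii).

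There are, however, two points that need repair. First, in your proof of (iii) the normal form $xa\cdot bab\cdot w$ is not always available. If $a\equiv x$ (so $u$ begins $xx$) and $\ell_x(u)=2$, the tail of $u$ after position~$2$ contains no further $x$, so you cannot place $bab$ with $a\equiv x$ there; dually if $b\equiv x$ and $\ell_x(u)=2$. In the first sub-case you should instead normalise to $xx\cdot bb\cdot w$ and apply $x^2y^2=(xy)^2$ to get $xbxb\,w$; in the second sub-case you should apply the same manoeuvre to $v$ rather than $u$, obtaining $xaxa\,w'$ from $xx\cdot aa\cdot w'$. Second, your two stated preservation principles, taken literally, do not suffice for the case ``simple letter at position~2.'' Principle~2 yields equal second letters, but (ii) also requires equal first letters, and Principle~1 only applies when the \emph{first} letter is simple, which is not given. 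Fortunately your own overlap argument already proves the stronger fact you need: since every substitution instance of $x^2y^2$, $y^2x^2$, $(xy)^2$ has length at least~$4$, a rewrite zone that meets position~$1$ must also meet position~$2$; so if the second letter is simple, the zone avoids positions~$1$ and~$2$ entirely, and \emph{both} initial letters are frozen. You should state the principle in this combined form (``if the second letter of $u$ is simple then both the first and second letters are invariant'') so that the subsequent case analysis actually follows from what is claimed.
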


\begin{proof}
Let us denote by $\alpha$ the fully invariant congruence on $F$
corresponding to $\mathcal X$ and by $\beta$ the set of all balanced
identities $u=v$ (considered as pairs of words) satisfying one of
the conditions (i)--(iv). We must prove that $\alpha=\beta$.

First, one can prove that $\alpha\subseteq\beta$. The identity
$xyzt=xytz$ satisfies (ii) while the identities
$x^2y^2=y^2x^2=(xy)^2$ satisfy (iv), so all these identities belong
to $\beta$. Straightforward verification shows that $\beta$ is a
fully invariant congruence on $F$. This implies the desired
inclusion.

It remains to verify that $\beta\subseteq\alpha$. We shall prove
that a balanced identity $u=v$ holds in $\mathcal X$ in each of the
cases (i)--(iv). The case~(i) is trivial. The identity $xyzt=xytz$
implies every identity of the kind~\eqref{permut} with
$g(1)=1$ and $g(2)=2$. Identifying and renaming letters in the latter
identity, one can obtain every identity with the property (ii). In
the rest of the proof we suppose that the identity $u=v$ is written
in the form $xya=ztb$ where $x,y,z,t\in X$ and $a,b\in F$ (the
letters $x$, $y$, $z$, and $t$ are not assumed to be distinct).
Consider the case~(iv). Suppose that $x\equiv z\equiv t$ and
$x\not\equiv y$, that is $u\equiv xya$ and $v\equiv x^2b$. Since $y$
is multiple, there exist balanced identities of the form
$xya=(xy)^2c$ and $x^2b=x^2y^2c$ for some $c\in F$. These identities
satisfy (ii), so they hold in $\mathcal X$. Hence we have
$$xya=(xy)^2c=x^2y^2c=x^2b$$
in $\mathcal X$. The same arguments show
that $\mathcal X$ satisfies $u=v$ whenever $y\equiv z\equiv t$ and
$x\not\equiv y$ (one should use the identity $(xy)^2=y^2x^2$ rather
than $(xy)^2=x^2y^2$ in this case). Therefore in the general case
$\mathcal X$ satisfies
$$xya=x^2c=xtd=t^2e=ztb\quad\text{where}\ c,d,e\in F$$
whenever these identities are balanced. Of course, such words $c$,
$d$, and $e$ exist, so we are done in the case~(iv). In the case~(iii) the
identity $u=v$ is $xya=xtb$ where $y$ and $t$ are multiple. We may
suppose that the letter $x$ is simple, because otherwise the
property (iv) holds. In particular, $x\not\equiv y$ and $x\not\equiv
z$. The variety $\mathcal X$ satisfies $xya=xy^2c$ and $xtb=xt^2d$
($c,d\in F$) whenever these identities are balanced (the case~(ii)).
Furthermore, $\mathcal X$ satisfies $y^2c=z^2d$ (the case~(iv)), so it
satisfies $xya=xy^2c=xt^2d=xtb$.
\end{proof}

For a non-negative integer $k$, consider the variety $$\mathcal
P_k=\var\{x_1\dots x_kyzt_1\dots t_k=x_1\dots x_kzyt_1\dots
t_k\}\ldotp$$ This variety satisfies every balanced identity of the
form $acb=adb$ where $\ell(a)=\ell(b)=k$.

\begin{lemma}[\cite{Perkins}]
\label{permutative}Every permutative variety is contained in
$\mathcal P_k$ for some $k$.\qed
\end{lemma}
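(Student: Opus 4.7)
The plan is to derive, from the given non-trivial permutation identity $x_1\cdots x_n = x_{g(1)}\cdots x_{g(n)}$, an identity of the form $x_1\cdots x_k yz t_1\cdots t_k = x_1\cdots x_k zy t_1\cdots t_k$ for some $k$; this is precisely the defining identity of $\mathcal P_k$, and gives $\mathcal V\subseteq\mathcal P_k$.

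For the framework, introduce for each $m\ge n$ the set $H_m = \{h\in\mathbb S_m \mid \mathcal V$ satisfies $x_1\cdots x_m = x_{h(1)}\cdots x_{h(m)}\}$. A routine check---composing two such identities by substitution for closure under multiplication, relabelling variables for inversion---shows that $H_m$ is a subgroup of $\mathbb S_m$. Multiplying the original identity on either side by fresh variables embeds the given $g$ (extended to fix the new positions) into $H_m$, so $H_m$ is non-trivial for all $m\ge n$. The target then reduces to showing that $H_m$ contains an adjacent transposition $(j,j+1)$ for some $m$: given such, the identity $x_1\cdots x_m = x_1\cdots x_{j-1}\, x_{j+1}\, x_j\, x_{j+2}\cdots x_m$ holds in $\mathcal V$, and multiplying both sides by a product of fresh variables on whichever side is shorter balances the prefix and suffix of the swap to a common length $k=\max(j-1,m-j-1)$, yielding the defining identity of $\mathcal P_k$.

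The heart of the argument, and the main obstacle, is producing an adjacent transposition in some $H_m$. Mere padding of $g$ does not suffice---the subgroup of $\mathbb S_m$ generated by all shifted copies of a fixed cycle type can avoid all adjacent transpositions (already visible when $g$ is a $3$-cycle). One must also exploit substitution of single variables by products of variables in the original identity, which yields identities whose positional permutations are not shifts of $g$ and so enlarges $H_m$. A concrete route: pick the smallest $i$ with $g(i)\ne i$, collapse most variables in the identity to a common letter $a$ while distinguishing two occurrences as $y$ and $z$ to extract a ``distant-swap'' identity $a^p y a^q z a^r = a^p z a^q y a^r$ (with $q\ge 0$), and then close the gap $q$ down to zero by iteratively combining this with further substitution instances of the original permutative identity. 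This closing-the-gap step is the technical core of Perkins' proof.
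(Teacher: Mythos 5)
The paper does not prove this lemma; it cites it from Perkins~\cite{Perkins} and states it without proof, so your proposal is not an alternative to an argument in the text but an attempt to supply one. The framing is sound: $H_m$ is indeed a subgroup, the reduction to producing an adjacent transposition in some $H_m$ is correct and reversible (since $\mathcal V\subseteq\mathcal P_k$ precisely when $(k+1,k+2)\in H_{2k+2}$), and the observation that padding alone cannot yield an adjacent transposition while substituting products of fresh variables for single variables can (such a substitution can change the parity of the positional permutation, which padding never does) correctly isolates the essential mechanism.

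However, the ``concrete route'' at the end has a genuine gap beyond the one you flag yourself. Collapsing the given permutation identity to a word in $a,y,z$ need not yield a distant-swap identity $a^p y a^q z a^r = a^p z a^q y a^r$: already for $g=(1\,2\,3)$, where the identity is $x_1x_2x_3=x_2x_3x_1$, the three possible collapses give $yza=zay$, $yaz=azy$, and $ayz=yza$, none of which is a distant swap. The reason is structural: a $3$-cycle has no $2$-element invariant subset, so no choice of two distinguished letters simply exchanges positions; to obtain any swap at all you must first apply the very substitution-and-composition machinery whose analysis you defer. Combined with the fact that the ``close the gap $q$ down to zero'' step is announced but not carried out, the proposal as written is a correct reduction together with an accurate diagnosis of where the difficulty lies, but not yet a proof of the lemma.
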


\begin{lemma}
\label{permutative without LZ}Any overcommutative permutative
variety $\mathcal V$ such that $\mathcal{LZ\nsubseteq V}$ satisfies the
identity
\begin{equation}
\label{no LZ} x^ny^nz^n=y^nx^nz^n
\end{equation}
for any sufficiently large $n$.
\end{lemma}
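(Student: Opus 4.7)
The plan is to exploit two facts in tandem. By Lemma~\ref{permutative} we have $\mathcal V\subseteq\mathcal P_k$ for some $k$, so any two balanced words of length at least $2k+2$ sharing first $k$ letters, last $k$ letters and content are equal in $\mathcal V$. On the other hand, since $\mathcal{LZ}\nsubseteq\mathcal V$, Lemma~\ref{LZ word problem} hands us a (balanced, by overcommutativity) identity $u=v$ of $\mathcal V$ whose two sides begin with \emph{distinct} first letters; this will be the only non-$\mathcal P_k$ ingredient available for changing prefixes.

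First I would reduce $u=v$ to the three-letter alphabet $\{x,y,z\}$ by substituting the first letter of $u$ by $x$, the first letter of $v$ by $y$, and every other variable by $z$. This yields an identity $\tilde u=\tilde v$ of $\mathcal V$ with common content $x^ay^bz^c$ for some $a,b\ge 1$, $c\ge 0$, in which $\tilde u$ starts with $x$ and $\tilde v$ starts with $y$. The key step is then to amplify these single-letter prefixes into prefixes of length $k$: applying the substitution $x\mapsto x^k$, $y\mapsto y^k$ produces an identity $\tilde u'=\tilde v'$ of $\mathcal V$ whose first $k$ letters are precisely $x^k$ on the left and $y^k$ on the right, with common content $x^{ak}y^{bk}z^c$.

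Finally, for every $n\ge\max(ak,bk,c+k)$ I would right-multiply both sides by the tail $w_n=x^{n-ak}y^{n-bk}z^{n-c}$. The resulting identity $\tilde u'w_n=\tilde v'w_n$ of $\mathcal V$ has on each side content $x^ny^nz^n$ and last $k$ letters $z^k$, while the first $k$ letters remain $x^k$ on the left and $y^k$ on the right. Since $\mathcal V\subseteq\mathcal P_k$ collapses all sufficiently long words sharing content and boundary $k$-letters, the left side is $\mathcal V$-equivalent to $x^ny^nz^n$ and the right side to $y^nx^nz^n$, which gives the required identity for all large enough $n$. The one delicate point is the coordinated choice of the amplifying substitution $x\mapsto x^k$, $y\mapsto y^k$ and the tail $w_n$ so that the two $\mathcal P_k$-normalizations land on precisely the target words; everything else is bookkeeping.
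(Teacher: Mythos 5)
Your argument is correct, and it follows a genuinely different (and somewhat slicker) route than the paper's. The paper also starts from $\mathcal V\subseteq\mathcal P_k$ and a balanced identity $xa=yb$ with distinct first letters, but then proves the more general family of identities $cz^n=dz^n$ (for all $c,d$ over $\{x,y\}$ with the right content) by an inverse induction on the length of the greatest common prefix of $c$ and $d$, shrinking that prefix step by step while using $xa=yb$ and $\mathcal P_k$ to rewrite. You instead replace that induction with a single ``amplifying'' endomorphism $x\mapsto x^k$, $y\mapsto y^k$, which turns the one-letter prefix mismatch into a $k$-letter prefix mismatch in one stroke; after padding with a tail $w_n$ so both sides have content $x^ny^nz^n$ and $z^k$ as suffix, one application of $\mathcal P_k$ to each side finishes the job. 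The only points worth being explicit about are the ones you already flag: $a,b\ge 1$ (so the blown-up words really begin with $x^k$ and $y^k$), $n\ge\max(ak,bk,c+k,k)$ (so the tail $w_n$ exists and the first/last $k$ letters are what you claim), and the standard fact that $\mathcal P_k$ identifies any two words of length $\ge 2k$ sharing content and the first and last $k$ letters. Your version avoids the bookkeeping of the induction entirely and would serve just as well; the paper's inductive argument has the mild advantage that it proves the stronger family $cz^n=dz^n$ en route, though that extra generality is not used later.
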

\begin{proof}
Being permutative, the variety $\mathcal V$ is contained in
$\mathcal P_k$ for some $k$ by Lemma~\ref{permutative}.
Lemma~\ref{LZ word problem} and the fact that
$\mathcal{LZ\nsubseteq V}$ imply that the variety $\mathcal V$ satisfies an
identity $xa=yb$ where $x\not\equiv y$. The identity $xa=yb$ is
balanced because $\mathcal V$ is overcommutative. We may suppose
that $a$ and $b$ contain only the letters $x$ and $y$. If this is
not the case then we identify all other letters with $x$. Assume that
$n\ge k+\ell(a)=k+\ell(b)$. We are going to prove that $\mathcal V$
satisfies all identities of the form $cz^n=dz^n$ where $c$ and $d$
contain only the letters $x$ and $y$ and
$\ell_x(c)=\ell_x(d)=\ell_y(c)=\ell_y(d)=n$. This would imply the
statement of the lemma we prove as a partial case. Take the greatest common
prefix $e$ of the words $c$ and $d$. There are words $c'$ and $d'$
with $c\equiv exc'$ and $d\equiv eyd'$. If $\ell(e)\ge k$ then the
identity
$$cz^n\equiv exc'z^n=eyd'z^n\equiv dz^n$$ holds in
$\mathcal V$ because $\mathcal V\subseteq\mathcal P_k$ and $n>k$.
Suppose that $0\le\ell(e)\le k$. To prove that $\mathcal V$
satisfies $cz^n=dz^n$ in this case, we use inverse induction by
$\ell(e)$. As the induction base we take the case $\ell(e)=k$ which
has already been considered. Now we shall prove the statement for
$\ell(e)<k$ assuming that it is proved for greater $\ell(e)$. Put
$p=\ell_x(e)+\ell_x(b)$ and $q=\ell_y(e)+\ell_y(a)$. The inequality
$n\ge k+\ell(a)=k+\ell(b)$ imply $n>p$ and $n>q$. The variety
$\mathcal V$ satisfies
\begin{align*}
cz^n\equiv exc'z^n&=exax^{n-p}y^{n-q}z^n&&
\text{by the induction assumption}\\
&=eybx^{n-p}y^{n-q}z^n&&\text{because } xa=yb\\
&=eyd'z^n\equiv dz^n&&\text{by the induction assumption},
\end{align*}
as was to be proved.
\end{proof}

\begin{lemma}
\label{permutative without LZ and X}Any overcommutative permutative
variety $\mathcal V$ such that $\mathcal{LZ,X\nsubseteq V}$
satisfies the identity
\begin{equation}
\label{no LZ,X}
xtx^{n-1}y^nz^n=yty^{n-1}x^nz^n
\end{equation}
for any sufficiently large $n$.
\end{lemma}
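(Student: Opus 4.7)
The plan is to mimic the structure of the proof of Lemma~\ref{permutative without LZ}, using the extra hypothesis $\mathcal X\nsubseteq\mathcal V$ to gain control over the letter at position~$2$ of the target. I would begin exactly as in the previous lemma: Lemma~\ref{permutative} gives $\mathcal V\subseteq\mathcal P_k$ for some $k$, and Lemma~\ref{permutative without LZ} itself gives that $\mathcal V$ satisfies \eqref{no LZ} for all $n$ beyond some threshold. Together these ingredients let us rearrange letters away from the first two positions of a word (via $\mathcal P_k$) and swap the two leading blocks of $x$'s and $y$'s (via \eqref{no LZ}); they do not, on their own, let us install a simple letter in position~$2$.

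For this I would invoke $\mathcal X\nsubseteq\mathcal V$: by Lemma~\ref{X word problem}, $\mathcal V$ satisfies some balanced identity $u=v$ which fails every one of the conditions (i)--(iv). In particular, the failure of (iv) forces at least one of the first two letters of $u$ or of $v$ to be simple. By identifying letters appropriately (in the style of the proof of Lemma~\ref{permutative without LZ}, where we reduced $xa=yb$ to involve only $x$ and $y$), I would reduce $u=v$ to one of two normal forms: either $xta=xsb$ with $x,s,t$ pairwise distinct and $t$ simple (this is the case in which the first letters of $u,v$ agree, forced by the failures of (ii) and (iii)), or $xta=ysb$ with $x\not\equiv y$ and at least one of $x,t,y,s$ simple (the case in which the first letters of $u,v$ differ, forced by the failure of (iv)).

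Next I would chain the reduced witness identity with \eqref{no LZ} and several applications of $\mathcal P_k$ to obtain \eqref{no LZ,X}. Roughly speaking, $\mathcal P_k$ rearranges the deep tail, \eqref{no LZ} swaps the two leading blocks, and the witness identity transports the simple letter $t$ into position~$2$. The combinatorial glue is a variant of the inverse induction on the length of a common prefix used at the end of the proof of Lemma~\ref{permutative without LZ}, now carried out for pairs of words of the form $ctc'z^n$ and $dtd'z^n$ where $c,c',d,d'$ range over suitable words in $\{x,y\}$; the induction base is again supplied by $\mathcal P_k$ once the common prefix is long enough, and the induction step is supplied by the reduced witness identity together with \eqref{no LZ}.

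The hardest step will be this last one: each of the two normal-form alternatives requires its own chain of substitutions, and one must verify that for $n$ sufficiently large every intermediate identity is balanced and admits the $\mathcal P_k$-rearrangement that the argument invokes. No ingredient beyond $\mathcal P_k$, \eqref{no LZ}, and the reduced witness identity should be needed, but the bookkeeping is noticeably heavier than in Lemma~\ref{permutative without LZ} because the target identity \eqref{no LZ,X} carries asymmetric information at positions $1$ and $2$ that the no-LZ identity alone cannot supply.
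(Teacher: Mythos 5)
Your proposal correctly identifies the ingredients—$\mathcal P_k$, identity \eqref{no LZ}, and a witness identity $u=v$ failing in $\mathcal X$—and you correctly note that the failure of condition (iv) of Lemma~\ref{X word problem} supplies a simple letter among the first two positions of $u$ or $v$. But there is a genuine gap in the case analysis that the two ``normal forms'' you describe do not close. The key point your plan misses is that the simple letter may occur in \emph{position 1}, not position 2, of the witness. In that situation your stated mechanism, ``the witness identity transports the simple letter $t$ into position 2,'' does not apply: the witness, reduced by identifying letters, looks like $y x^{p+q}=x^p y x^q$ (with $y$ simple and $p\ge 1$), and no amount of chaining with $\mathcal P_k$ and \eqref{no LZ} will install the simple letter in position 2 from that alone. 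The paper handles this by first \emph{pre-multiplying} the reduced witness by a fresh letter on the left, obtaining $xyx^{p+q}=x^{p+1}yx^q$, which shifts the simple letter from position 1 to position 2 and reduces the argument to the case where first letters agree. This step is essential and is absent from your sketch.

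Beyond that, your second ``normal form'' $xta=ysb$ with $x\not\equiv y$ lumps together subcases that the paper deliberately treats separately: when the second letters coincide, are simple, and the first letters are both multiple, the witness is used directly (to swap the first letters of the two sides of the target identity), while when the second letters are distinct with one simple, the argument reduces to the first-letters-agree case by identifying the first letters. These are different chains and the role played by the witness in each differs. Your proposed inverse induction on the length of a common prefix is a plausible alternative framework, but without confronting the subcases just described it is not clear it closes; the paper avoids induction here entirely and writes out a short explicit chain of equalities in each of four cases, which is cleaner and easier to verify. So the high-level strategy is right, but the case analysis as proposed is too coarse, and the pre-multiplication trick needed to handle a simple first letter is missing.
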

\begin{proof}
By Lemma~\ref{permutative} we have $\mathcal V\subseteq\mathcal P_k$
for some $k$. By Lemma~\ref{permutative without LZ} the variety
$\mathcal V$ satisfies
\begin{equation}
\label{x^my^mz^m=y^mx^mz^m}x^my^mz^m=y^mx^mz^m
\end{equation}
for some $m\ge k$. The
variety $\mathcal V$ satisfies a balanced identity $u=v$ which fails
in $\mathcal X$. According to Lemma~\ref{X word problem}, there are
four possible cases.

\smallskip

\emph{Case} 1. The first letters in $u$ and $v$ coincide, the second
letters are distinct and at least one of the second letters is
simple. Identifying all letters in $u=v$ except this simple letter,
we obtain an identity of the form
\begin{equation}
\label{xyx^p+q-1=x^p+1yx^q-1}xyx^{p+q-1}=x^{p+1}yx^{q-1}
\end{equation}
for some $p$ and $q$. This identity implies
$xyx^{pr+q-1}=x^{pr+1}yx^{q-1}$ for all positive integers $r$, so
$p$ can be replaced by $pr$ in~\eqref{xyx^p+q-1=x^p+1yx^q-1}. This allows us
to suppose that $p\ge k$. Let us take $n$ with $n\ge m+k$ and $n\ge
p+q$. The variety $\mathcal V$ satisfies
\begin{align*}
xtx^{n-1}y^nz^n&=x^{p+1}tx^{n-p-1}y^nz^n&&
\text{by \eqref{xyx^p+q-1=x^p+1yx^q-1}}\\
&=x^my^mz^mtx^{n-m}y^{n-m}z^{n-m}&&
\text{because }\mathcal V\subseteq\mathcal P_k\\
&=y^mx^mz^mtx^{n-m}y^{n-m}z^{n-m}&&\text{by \eqref{x^my^mz^m=y^mx^mz^m}}\\
&=y^{p+1}ty^{n-p-1}x^nz^n&&
\text{because }\mathcal V\subseteq\mathcal P_k\\
&=yty^{n-1}x^nz^n&& \text{by \eqref{xyx^p+q-1=x^p+1yx^q-1}}\ldotp
\end{align*}

\smallskip

\emph{Case} 2. The first letters in $u$ and $v$ are distinct and at
least one of these letters is simple. Identifying all letters in
$u=v$ except this simple letter we obtain $yx^{p+q}=x^pyx^q$ for
some positive $p$ and non-negative $q$. This identity implies
$xyx^{p+q}=x^{p+1}yx^q$, so we return to the Case 1.

\smallskip

\emph{Case} 3. The second letters in $u$ and $v$ coincide and are
simple while the first letters are distinct and multiple. Let us write the
identity $u=v$ in the form $xtu'=ytv'$. We may suppose that $u'$ and
$v'$ contain only the letters $x$ and $y$ because all other letters
can be identified with $x$. Put $p=\ell_x(v')$ and $q=\ell_y(u')$.
Let us take $n$ with $n\ge k+p$, $n\ge q$, $n\ge k+m$, and $n\ge
m+1$. We have that
\begin{align*}
xtx^{n-1}y^nz^n&=xtx^ku'x^{n-k-p}y^{n-q}z^n&&
\text{because }\mathcal V\subseteq\mathcal P_k\\
&=ytx^kv'x^{n-k-p}y^{n-q}z^n&&\text{because }xtu'=ytv'\\
&=ytx^my^mz^mx^{n-m}y^{n-m-1}z^{n-m}&&
\text{because }\mathcal V\subseteq\mathcal P_k\\
&=yty^mx^mz^mx^{n-m}y^{n-m-1}z^{n-m}&&\text{by \eqref{x^my^mz^m=y^mx^mz^m}}\\
&=yty^{n-1}x^nz^n&&\text{because }\mathcal V\subseteq\mathcal P_k
\end{align*}
holds in the variety $\mathcal V$.

\smallskip

\emph{Case} 4. The first letters in $u$ and $v$ are distinct and multiple, the second letters are distinct, and at least 
one of the second letters is simple. Identifying the first letters in the words $u$ and $v$, we return to the Case 1.
\end{proof}

\section{Proof of Theorem~\ref{main}} \label{proof}

The proof follows the scheme $a)\longrightarrow b)\longrightarrow
e)\longrightarrow d)\longrightarrow c)\longrightarrow a)$. The
implications $a)\longrightarrow b)$ and $d)\longrightarrow c)$ are
obvious. The implication $c)\longrightarrow a)$ holds because every
finite lattice satisfies a non-trivial identity
(see~\cite[Lemma~V.3.2]{Gratzer}, for instance). It remains to
verify the implications $b)\longrightarrow e)\longrightarrow d)$.

\smallskip

$b)\longrightarrow e)$ Arguing by contradiction, suppose that the
property $e)$ fails. We shall prove that every finite lattice can be
embedded into one of the lattices
$\Con\bigl(W_\lambda/\varphi_\lambda(\mathcal V)\bigr)$. Hence every finite
lattice can be embedded into $\mathrm L_{\mathbf{OC}}(\mathcal V)$
by Proposition~\ref{embed}. Since every non-trivial lattice
quasiidentity fails in some finite lattice~\cite{Budkin-Gorbunov},
this will give us the contradiction we need. There are three cases
to consider.

\smallskip

\emph{Case} 1. The variety $\mathcal V$ is not permutative. Consider
the partition $\lambda=(\underbrace{1,\dots,1}_{n\text{ times}})$. For this
partition we have $G_\lambda=\mathbb S_n$. The corresponding
$G_\lambda$-set $W_\lambda$ is regular (i.\,e., it is transitive and
any non-unit element of $G_\lambda$ has no fixed points). In this
case $\Con\bigl(W_\lambda)\cong\Sub(G_\lambda\bigr)=\Sub(\mathbb S_n)$ where
$\Sub(G)$ is the subgroup lattice of a group $G$
(see~\cite[Lemma~4.20]{McKenzie-McNulty-Taylor}). Since the variety
$\mathcal V$ is not permutative, the congruence
$\varphi_\lambda(\mathcal V)$ is the equality relation on
$W_\lambda$, so $W_\lambda/\varphi_\lambda(\mathcal V)=W_\lambda$.
We have obtained that
$\Con\bigl(W_\lambda/\varphi_\lambda(\mathcal V)\bigr)\cong
\Sub(\mathbb S_n)$. Every finite lattice can be embedded
into a lattice $\Sub(\mathbb S_n)$ for some $n$ \cite{Pudlak-Tuma},
so we are done.

\smallskip

\emph{Case} 2. The variety $\mathcal V$ contains one of the
subvarieties $\mathcal{LZ}$ and $\mathcal{RZ}$. By duality
principle, we may suppose that $\mathcal{LZ\subseteq V}$. Consider
the partition $\lambda=(m,m-1,\dots,2,1)$ for an arbitrary $m\ge 2$.
The group $G_\lambda$ is trivial, whence
$\Con\bigl(W_\lambda/\varphi_\lambda(\mathcal V)\bigr)=
\Part\bigl(W_\lambda/\varphi_\lambda(\mathcal V)\bigr)$. Since
$\mathcal{LZ\subseteq V}$, the variety $\mathcal V$ satisfies no
identity $u=v$ where the first letters in $u$ and $v$ are distinct.
In particular, $(x_ia,x_jb)\not\in\varphi_\lambda(\mathcal V)$
whenever $x_ia,x_jb\in W_\lambda$ and $i\neq j$. Hence the set
$W_\lambda/\varphi_\lambda(\mathcal V)$ contains at least $m$
elements. Any finite lattice can be embedded into any sufficiently
large finite partition lattice \cite{Pudlak-Tuma}, so it can be
embedded into some of the lattices
$\Con\bigl(W_\lambda/\varphi_\lambda(\mathcal V)\bigr)$.

\smallskip

\emph{Case} 3. The variety $\mathcal V$ contains one of the
subvarieties $\mathcal X$ and $\overleftarrow{\mathcal X}$, say,
$\mathcal{X\subseteq V}$. Consider the same partition $\lambda$ as
in Case~2. Again we have
$\Con\bigl(W_\lambda/\varphi_\lambda(\mathcal V)\bigr)=
\Part\bigl(W_\lambda/\varphi_\lambda(\mathcal V)\bigr)$. Since
$\mathcal{X\subseteq V}$, Lemma~\ref{X word problem} implies that
the variety $\mathcal V$ satisfies no identity $u=v$ where the first
letters in $u$ and $v$ are distinct and the second letter in $u$ is
simple. In particular,
$(x_ix_ma,x_jx_mb)\not\in\varphi_\lambda(\mathcal V)$ whenever
$x_ix_ma,x_jx_mb\in W_\lambda$ and $i\neq j$. Hence the set
$W_\lambda/\varphi_\lambda(\mathcal V)$ contains at least $m-1$
elements, so we are done, as in Case~2.

\smallskip

$e)\longrightarrow d)$. Let $\mathcal V$ be an overcommutative variety
satisfying $e)$. Consider the subdirect decomposition
of the lattice $\mathrm L_{\mathbf{OC}}(\mathcal V)$ given by
Corollary~\ref{decomposition general}. We will prove that the
cardinalities of the subdirect multipliers
$\overline{\Con\bigl(W_\lambda/\varphi_\lambda(\mathcal V)\bigr)}$ are
bounded. This implies that there exist only finite number of
non-isomorphic lattices among these multipliers. The lattice
$\mathrm L_{\mathbf{OC}}(\mathcal V)$ is quasiequationally
equivalent to the direct product of these distinct multipliers
because quasiidentities are preserved under taking sublattices and
direct products. Therefore the implication will be proved.

Let us fix a partition $\lambda$. The variety $\mathcal V$ is
contained in $\mathcal P_k$ for some $k$ by Lemma~\ref{permutative}.
We may assume that $\lambda$ is a partition of a number greater than
$2k+1$. Indeed, there is only a finite number of other partitions
and existence of an upper bound for
$\bigl|\Con\bigl(W_\lambda/\varphi_\lambda(\mathcal V)\bigr)\bigr|$ does not
depend on them. By Lemmas~\ref{permutative without LZ}, \ref{permutative
without LZ and X} and their duals the variety $\mathcal V$ satisfies the
identities \eqref{no LZ}, \eqref{no LZ,X}, and their duals for some
$n$. Consider the set $I$ of all integers $i$, $1\le i<n+2k$, such
that at least $4k$ components of $\lambda$ are equal to $i$. For
every $i\in I$, we fix a set of letters $Y_i$ such that $|Y_i|=4k$
and $\lambda_j=i$ whenever $x_j\in Y_i$. This means that
$\ell_x(w)=i$ whenever $x\in Y_i$ and $w\in W_\lambda$. Each word
$w\in W_\lambda$ can be written as $w\equiv abc$ where
$\ell(a)=\ell(c)=k$. We denote $a$, $b$, and $c$ by $L(w)$, $M(w)$,
and $R(w)$, respectively. Note that
$(w_1,w_2)\in\varphi_\lambda(\mathcal V)$ whenever $w_1,w_2\in
W_\lambda$, $L(w_1)=L(w_2)$, and $R(w_1)=R(w_2)$. Consider the
following two restrictions on a word $w\in W_\lambda$:
\begin{itemize}
\item[(i)]there are no letters $x$ in the words $L(w)$ and $R(w)$ with $\ell_x(w)\ge n+2k$
and $x\not\equiv x_1$, $x\not\equiv x_2$ (recall that
$\ell_1(w)\ge\ell_2(w)\ge\ell_x(w)$ for any $x\in
X\setminus\{x_1,x_2\}$, so this property trivially holds whenever
$\ell_2(w)<n+2k$);
\item[(ii)]there are no letters $x$ in the words $L(w)$ and $R(w)$ with
$\ell_x(w)=i\in I$ and $x\not\in Y_i$.
\end{itemize}
Let us prove that, for any $w\in W_\lambda$, there exist $w'\in
W_\lambda$ with the property (i) and such that $w=w'$ in $\mathcal
V$. This means that each $\varphi_\lambda(\mathcal V)$-class
contains a word with the property (i). Consider an occurrence in
$L(w)$ of a letter $x$ with $\ell_x(w)>n+2k$, $x\not\equiv x_1$, and
$x\not\equiv x_2$. There are words $d$ and $e$ with $L(w)\equiv
dxe$. Since $\ell_1(w)\ge\ell_2(w)\ge\ell_x(w)\ge n+2k$, we have
$$\ell_x(M(w)),\ell_1(M(w)),\ell_2(M(w))\ge n\ldotp$$ Hence there exists a
balanced identity of the form $M(w)=x^{n-1}x_1^nx_2^nf$ for some
word $f$. The variety $\mathcal V$ satisfies
\begin{align*}
w&\equiv L(w)M(w)R(w)\\
&\equiv dxeM(w)R(w)\\
&=dxex^{n-1}x_1^nx_2^nfR(w)&&
\text{because }\mathcal V\subseteq\mathcal P_k\\
&=dx_1e x_1^{n-1}x^nx_2^nfR(w)&&
\text{by \eqref{no LZ} if }e\text{ is empty}\\
&&&\text{or by \eqref{no LZ,X} otherwise}\ldotp
\end{align*}
The word $w''\equiv dx_1e
x_1^{n-1}x^nx_2^nfR(w)$ is such that $L(w'')\equiv dx_1e$,
$R(w'')\equiv R(w)$, and $(w,w'')\in\varphi_\lambda(\mathcal V)$. We
have excluded one occurrence of the letter $x$ in $L(w)$. Repeating
this procedure one can exclude all occurrences in $L(w)$ of letters
$x$ with $\ell_x(w)>n+2k$ except $x_1$ and $x_2$. Dually, one can
exclude all occurrences of such letters in $R(w)$.

Now we shall prove that every identity $u=v$ such that $u,v\in
W_\lambda$ is equivalent to an identity $u'=v'$ where $u'$ and $v'$
satisfy (ii). Since
$$\ell(L(u))+\ell(L(v))+\ell(R(u))+\ell(R(v))=4k,$$
the words $L(u)$, $L(v)$, $R(u)$, and $R(v)$ contain at most $4k$ distinct
letters. Therefore, for $1\le i<n+2k$, they contain at most $4k$ distinct
letters $x$ with $\ell_x(u)=i$. Consider any element $g\in
G_\lambda$ which maps, for every $1\le i<n+2k$, all letters $x$ in
$L(u),L(v),R(u),R(v)$ with $\ell_x(w)=i$ to the set $Y_i$. To obtain
the identity $u'=v'$, one may take $u'\equiv g(u)$ and $v'\equiv
g(v)$.

Combining the statements in the previous two paragraphs, we conclude
that every identity $u=v$ with $u,v\in W_\lambda$ is equivalent
within the variety $\mathcal V$ to an identity $u'=v'$ where $u'$
and $v'$ satisfy (i) and (ii). This statement may be reformulated in
terms of $G$-sets. To do this, denote by $A$ the set of
$\varphi_\lambda(\mathcal V)$-classes of all words in $W_\lambda$
satisfying (i) and (ii). We have proved that every congruence on
$W_\lambda/\varphi_\lambda(\mathcal V)$ is generated by some subset
of $A\times A$. The $\varphi_\lambda(\mathcal V)$-class of $w$ is
defined by $L(w)$ and $R(w)$ and does not depend on $M(w)$.
Conditions (i) and (ii) mean that $L(w)$ and $R(w)$ for all such $w$
may contain at most $4k(2n+k-1)+2$ distinct letters in common: at
most $4k$ letters $x$ with $\ell_x(w)=i$ for every $1\le i<2n+k$ and
at most 2 letters $x$ with $\ell_x(w)\ge 2n+k$. Hence $|A|\le N$
where $N=\bigl(4k(2n+k-1)+2\bigr)^{2k}$. Therefore
$$\bigl|\Con\bigl(W_\lambda/\varphi_\lambda(\mathcal V)\bigr)\bigr|\le
2^{|A\times A|}\le2^{N^2}\ldotp$$
This upper bound does not depend on the partition
$\lambda$.

Theorem~\ref{main} is proved.\qed

\begin{remark}
\label{cardinalities}The proof of the implication $e)\longrightarrow d)$
bases on the fact that the cardinalities of the lattices
$\Con\bigl(W_\lambda/\varphi_\lambda(\mathcal V)\bigr)$ are bounded whenever
$\mathcal V$ satisfies $e)$. However the cardinalities of the sets
$W_\lambda/\varphi_\lambda(\mathcal V)$ can be unbounded. For
example, put
$$\mathcal V=\var\{x^2y=yx^2,\ xyz=xzy\}\ldotp$$
For the partition $\lambda=(\underbrace{1,\dots,1}_{n\text{ times}})$,
it is easy to verify that the set $W_\lambda/\varphi_\lambda(\mathcal V)$
contains exactly $n$ elements.
\end{remark}

\begin{remark}
\label{COM+LZ}The variety $\mathcal{LZ}$ is generally known to be an atom of
the lattice of all semigroup varieties. Consequently it would be possible to
conjecture that the lattice $\mathrm L_{\mathbf{OC}}(\mathcal{COM\vee LZ})$
is small in a sense. Surprisingly, this conjecture is very far from the real
situation. The proof of Theorem~\ref{main} shows that this lattice contains
an isomorphic copy of every finite lattice (see Case~2 in the proof of the
implication~$b)\longrightarrow e)$).
\end{remark}

\subsection*{Acknowledgemets} The author is grateful to
professor B.\,Vernikov for helpful discussions.

\end{document}